\newcommand{\R}{{I\!\!R}}
\newcommand{\X}{{\bf x}}
\newcommand{\Z}{{\bf z}}
\newcommand{\Y}{{\bf y}}
\newcommand{\fma}{{\small {\tt fma}}}
\title{Optimization of Generalized Jacobian Chain Products without Memory Constraints}
\author{Uwe Naumann\thanks{Informatik 12: Software and Tools for Computational Engineering, RWTH Aachen University, Germany. \email{naumann@stce.rwth-aachen.de}}}
\begin{document}

\maketitle

\begin{keywords} algorithmic differentiation, NP-completeness, dynamic programming \end{keywords}
        \begin{AMS} 65D25, 68Q17, 90C39, 65-04 \end{AMS}

\begin{abstract}
The efficient computation of Jacobians represents a fundamental challenge in 
computational science and engineering. Large-scale modular numerical 
simulation programs can be regarded as sequences of evaluations of in our case
differentiable subprograms with corresponding local Jacobians. The latter are
typically not available. Tangent and adjoint versions of the 
individual subprograms are assumed to be given as results of algorithmic 
differentiation instead. The classical (Jacobian) matrix chain product
formulation is extended with the optional evaluation of
matrix-free Jacobian-matrix and matrix-Jacobian products as tangents and 
adjoints.
We present a dynamic programming algorithm for the solution
	of the {\sc Generalized Dense Jacobian Chain Product} problem without
considering constraints on the available persistent system memory. In other 
words, the naive evaluation of an adjoint of the entire simulation program 
is assumed to be a feasible option. We obtain optimal solutions which 
	improve the best 
state of the art methods by factors of up to sixty on a set of randomly 
generated problem instances of growing size. Our results illustrate the 
	potential of algorithmic differentiation methods beyond tangent and adjoint modes.
\end{abstract}

\section{Introduction}

This paper extends our prior work on computational cost-efficient accumulation 
of Jacobian matrices. The corresponding combinatorial {\sc Optimal Jacobian 
Accumulation} (OJA) problem was shown to be
NP-complete in \cite{Naumann2008OJa}. Elimination techniques yield different
structural variants of OJA discussed in 
\cite{Naumann2004Oao}. Certain special cases turn out to be computationally 
tractable as described in \cite{Griewank2003AJa} and \cite{Naumann2008Ove}.

Relevant closely related work by others includes the introduction of 
{\sc Vertex Elimination} (VE) \cite{Griewank1991OtC}, an integer programming 
approach to VE \cite{Chen2012AIP}, computational experiments with VE 
\cite{Forth2004JCG}, and the formulation of OJA as LU factorization 
\cite{Pryce2008FAD}. 

Let the multivariate vector function
$
\Y=F(\X) : \R^n \rightarrow \R^m
$
(in the following referred to as the {\em primal} function)
be differentiable over the domain of interest and let
$
F=F_q \circ F_{q-1} \circ \ldots \circ F_2 \circ F_1 
$
be such that $\Z_i=F_i(\Z_{i-1}) : \R^{n_i} \rightarrow \R^{m_i}$ for $i=1,\ldots,q$
and $\Z_0=\X,$ $\Y=\Z_q.$
According to the chain rule of differential calculus the Jacobian $F'=F'(\X)$ of $F$ is
equal to
\begin{equation} \label{eqn:jcp}
F' \equiv \frac{d F}{d \X}=F'_q \cdot F'_{q-1} \cdot \ldots \cdot F'_1 \in \R^{m \times n} \; .
\end{equation}
We discuss the minimization of the computational cost in term of {\em fused multiply-add} (\fma) operations of the evaluation of Equation~(\ref{eqn:jcp}).

Algorithmic differentiation \cite{Griewank2008EDP,Naumann2012TAo} 
offers two fundamental modes for {\em preaccumulation} of the local Jacobians 
$F'_i = F'_i(\Z_{i-1}) \in \R^{m_i \times n_i}$ prior to the evaluation of the matrix chain product in Equation~(\ref{eqn:jcp}). Directional derivatives are computed in {\em scalar tangent mode} as
\begin{equation} \label{eqn:st}
\dot{\Z}_i=F'_i \cdot \dot{\Z}_{i-1} \in \R^{m_i} \; .
\end{equation}
Accumulation of the entire Jacobian requires evaluation of $n_i$ tangents 
in the Cartesian basis directions in $\R^{n_i}$
if
$F'_i$ is dense. Potential sparsity can and should be detected \cite{Griewank2002DJS} and exploited \cite{Gebremedhin2005WCI,Hossain2002SIi}. 
We denote the computational cost of evaluating a subchain 
$F'_j \cdot \ldots \cdot F'_i,$ $j>i,$ of Equation~(\ref{eqn:jcp}) 
as $\fma_{j,i}$. The computational cost of evaluating $F'_i$ in tangent 
mode is denoted as $\fma_{i,i}=\dot{\fma}_i$.

{\em Scalar Adjoint mode} yields
\begin{equation} \label{eqn:sa}
\bar{\Z}_{i-1}= \bar{\Z}_i \cdot F'_i \in \R^{1 \times n_i} 
\end{equation}
and hence dense Jacobians by $m_i$ reevaluations of Equation~(\ref{eqn:sa}) with
$\bar{\Z}_i$ ranging over the Cartesian basis directions in $\R^{m_i}.$ 
The scalar adjoint of $\Z_i$ can be interpreted as the derivative 
of some scalar objective with respect to $\Z_i$ yielding
$\bar{\Z}_i \in \R^{1 \times m_i}$ as a row vector.
The computational cost of evaluating $F'_i$ in adjoint
mode is denoted as $\fma_{i,i}=\bar{\fma}_i$. 
Further formalization of this cost 
estimate will follow in Section~\ref{sec:gjcp}. Combinatorially more 
challenging Jacobian accumulation methods based on elimination techniques 
applied to computational graphs \cite{Naumann2004Oao} will not be considered 
here.
While they may yield a further reduction of $\fma_{i,i}$ the resulting 
irregularity of memory accesses makes actual gains in 
computational performance hard to achieve.

The {\sc Jacobian Chain Product Bracketing} problem asks for a 
bracketing of the right-hand side of Equation~(\ref{eqn:jcp})
which minimizes the number of \fma\ operations.
{\sc Jacobian Chain Product Bracketing} can be solved by dynamic programming 
\cite{Bellman1957DP,Godbole1973} 
even if the individual factors
are sparse. Sparsity patterns of all subproducts need to be evaluated
symbolically in this case \cite{Griewank2003AJa}.
The following recurrence yields an optimal bracketing at a computational cost
of $O(q^3):$
\begin{equation} \label{eqn:dp1}
\fma_{j,i}=
\begin{cases}
\min(\dot{\fma}_i,\bar{\fma}_i) & j=i \\ 
\min_{i \leq k < j} \left (\fma_{j,k+1}+\fma_{k,i} + \fma_{j,k,i} \right ) & j>i \; .
\end{cases}
\end{equation}
Facilitated by the
{\em overlapping subproblems} and {\em optimal substructure} 
properties of {\sc Jacobian Chain Product Bracketing} the optimization of enclosing chains
look up tabulated solutions to all subproblems at constant time complexity.
For example, a dense Jacobian chain product of length $q=4$ with 
$F'_4 \in \R^4,$ $F'_3 \in \R^{1 \times 4},$ $F'_2 \in \R^{4 \times 5},$ $F'_1 \in \R^{5 \times 3}$ and
$\fma_{4,4}=21,$ 
$\fma_{3,3}=5,$ 
$\fma_{2,2}=192,$ 
$\fma_{1,1}=84$ yields the optimal bracketing
$
F'=F'_4 \cdot ((F'_3 \cdot F'_2) \cdot F'_1)
$
with a cumulative cost of $ 349\;\fma.$ 

The more general {\sc Jacobian Chain Product} problem asks for some \fma-optimal
way 
to compute $F'$ without the restriction of the search space to valid 
bracketings of Equation~(\ref{eqn:jcp}). For example, the matrix product
$$
\begin{pmatrix}
6 & 0 \\
0 & 7 \\
\end{pmatrix}
\begin{pmatrix}
7 & 0 \\
0 & 6 \\
\end{pmatrix}=
\begin{pmatrix}
42 & 0 \\
0 & 42 \\
\end{pmatrix}
$$
\cite{Adams1979THG} can be evaluated at the expense of a single \fma\ as opposed 
to two by exploiting commutativity of scalar multiplication.
{\sc Jacobian Chain Product} is known to be NP-complete; see \cite{Naumann2020CSC} as well as the upcoming proof 
of Theorem~\ref{the:gjcp_np}.

\section{\sc Generalized Jacobian Chain Product} \label{sec:gjcp}

Any $F_i=F_i(\Z_{i-1})$ induces a labeled directed acyclic graph (DAG) $G_i=G_i(\Z_{i-1})=(V_i,E_i)$ 
for $i=1,\ldots,q.$ 
Vertices in $V_i=\{v^i_j : j=1,\ldots,|V_i|\}$ represent the elemental arithmetic operations $\varphi^i_j \in \{+,\sin,\ldots\}$ executed by the implementation of $F_i$ for given $\Z_{i-1}$. Edges in $(j,k) \in E_i \subseteq V_i \times V_i$ 
mark data dependencies between arguments and results of elemental operations. They are labeled with local partial derivatives $$
\frac{\partial \varphi^i_k}{\partial v^i_j} \; , \quad k:~(j,k) \in E_i
$$ of the 
elemental functions with respect to their arguments. An example is shown in Figure~\ref{fig:ex1}. Note that a single evaluation of the adjoint in 
Figure~\ref{fig:ex1}~(d) delivers both gradient entries for $\bar{z}_1^{i+1}=1$ 
while two evaluations of the tangent with $\dot{\Z}^i=(1~0)^T$ and $\dot{\Z}^i=(0~1)^T$ are required to complete the same task.
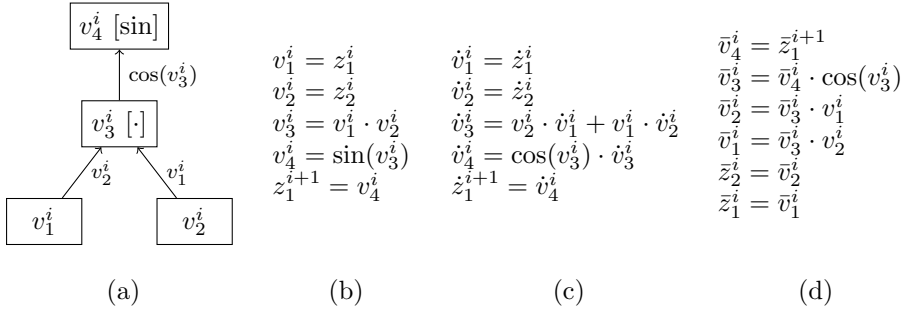
\begin{figure}[hbt]
\begin{tabular}{cccc}
\begin{minipage}[c]{.24\linewidth}
\begin{tikzpicture}[scale=1, transform shape, rectangle]
  \begin{pgfscope}
\tikzstyle{every node}=[draw,rectangle,minimum width=1cm]
	  \node (0) at (0,0) {$v^i_1$};
	  \node (1) at (2,0) {$v^i_2$};
	  \node (2) at (1,1.3) {$v^i_3~[\cdot]$};
	  \node (3) at (1,2.6) {$v^i_4~[\sin]$};
  \end{pgfscope}
  \begin{scope}[->]
          \draw (0) -- (2) node[midway,right] {\footnotesize $v^i_2$}; 
          \draw (1) -- (2) node[midway,right] {\footnotesize $v^i_1$}; 
          \draw (2) -- (3) node[midway,right] {\footnotesize $\cos(v^i_3)$}; 
  \end{scope}
\end{tikzpicture}

\end{minipage} &
\begin{minipage}[c]{.15\linewidth}
$v_1^i=z_1^i $ \\
$v_2^i=z_2^i $ \\
$v_3^i=v_1^i \cdot v_2^i$ \\
$v_4^i=\sin(v_3^i)$ \\
$z_1^{i+1}=v_4^i$ 
\end{minipage} &
\begin{minipage}[c]{.24\linewidth}
$\dot{v}_1^i=\dot{z}_{1}^i $ \\
$\dot{v}_2^i=\dot{z}_2^i $ \\
$\dot{v}_3^i=v_2^i \cdot \dot{v}_1^i +v_1^i \cdot \dot{v}_2^i$ \\
$\dot{v}_4^i=\cos(v_3^i) \cdot \dot{v}_3^i$ \\
$\dot{z}_1^{i+1}=\dot{v}_4^i$ 
\end{minipage} &
\begin{minipage}[c]{.2\linewidth}
$\bar{v}_4^i=\bar{z}_1^{i+1}$ \\
$\bar{v}_3^i=\bar{v}_4^i \cdot \cos(v_3^i)$ \\
$\bar{v}_2^i=\bar{v}_3^i \cdot v_1^i$ \\
$\bar{v}_1^i=\bar{v}_3^i \cdot v_2^i$ \\
$\bar{z}_2^i=\bar{v}_2^i$ \\
$\bar{z}_1^i=\bar{v}_1^i$ 
\end{minipage} \\
\\
(a) & (b) & (c) & (d)
\end{tabular}
\caption{Simple Example: Labeled DAG (a); primal (b); scalar tangent (c); scalar adjoint (d)} \label{fig:ex1}
\end{figure}

Preaccumulation of local Jacobians $F'_i \in \R^{m_i \times n_i}$ requires
either $n_i$ evaluations of the scalar tangent or 
$m_i$ evaluations of the scalar adjoint. In order to avoid unnecessary
reevaluation of the function values and of the local partial derivatives we 
switch to vectorized versions of tangent and adjoint modes.

For given 
$\Z_{i-1} \in \R^{n_i}$ and
$\dot{Z}_{i-1} \in \R^{n_i \times \dot{n}_i}$ the
Jacobian-free evaluation of $$\dot{Z}_i = F_i'(\Z_{i-1}) \cdot \dot{Z}_{i-1} \in \R^{m_i \times \dot{n}_i}$$ 
in {\em vector tangent mode} is denoted as 
\begin{equation} \label{eqn:vt}
\dot{Z}_i := \dot{F}_i(\Z_{i-1}) \cdot \dot{Z}_{i-1} \; .
\end{equation} 
Preaccumulation of a dense $F'_i$ requires $\dot{Z}_{i-1}$ to be equal to the 
identity $I_{n_i} \in \R^{n_i \times n_i}.$
Equation~(\ref{eqn:vt}) amounts to the simultaneous propagation of $\dot{n}_i$ 
tangents through $G_i.$ Explicit construction
(and storage) of $G_i$ is not required as the computation of tangents  
augments the primal arithmetic locally. For example, the codes in 
Figure~\ref{fig:ex1}~(b) and (c) can be interleaved as 
$v^i_j=\ldots;~\dot{v}^i_j=\ldots$ for 
$j=1,\ldots,4.$
Tangent propagation induces a computational cost of 
\mbox{$\dot{n}_i \cdot |E_i|$} in addition to the invariant cost of the primal function
evaluation ($|V_i|$) augmented with the computation of all local partial 
derivatives ($|E_i|$). 
The invariant memory requirement of the primal function evaluation is increased by the memory requirement of the tangents
the minimization of which turns out to be NP complete as a variant of the
{\sc Directed Bandwidth} problem \cite{Naumann2018LMA}.
In the following the invariant part of the computational 
cost will not be included in cost estimates. The memory requirements 
of all instances of the discrete search spaces of the combinatorial optimization
problems considered in this paper are assumed to be feasible.

Equation~(\ref{eqn:vt}) can be interpreted as the ``product'' of the DAG $G_i$ 
with the matrix $\dot{Z}_{i-1}.$ 
If $\dot{Z}_{i-1}$ is dense, then its DAG becomes the directed acyclic
version of the complete bipartite graph $K_{n_{i-1},m_{i-1}}.$ The computation
of $\dot{Z}_i$ amounts to the application of the chain rule to the composite
DAG \cite{Baur1983TCo}. It can be interpreted as forward vertex elimination \cite{Griewank1991OtC} yielding a computational cost of \mbox{$\dot{n}_{i-1} \cdot |E_i|$}.
 
For given 
$\Z_{i-1} \in \R^{n_i}$ and
$\bar{Z}_i \in \R^{\bar{m}_i \times m_i}$ the
Jacobian-free evaluation of $$\bar{Z}_{i-1} = \bar{Z}_i \cdot F_i'(\Z_{i-1}) \in \R^{\bar{m}_i \times n_i}$$ 
in {\em vector adjoint mode} 
is denoted as 
\begin{equation} \label{eqn:va}
\bar{Z}_{i-1} := \bar{Z}_i \cdot \bar{F}_i(\Z_{i-1}) \; .
\end{equation} 
Preaccumulation of a dense $F'_i$ requires $\bar{Z}_i$ to be equal to the 
identity
$I_{m_i} \in \R^{m_i \times m_i}.$
Equation~(\ref{eqn:va}) represents the simultaneous reverse propagation of 
$\bar{m}_i$ adjoints through $G_i.$ Without constraints on
the total memory requirement  
the cost-optimal propagation of adjoints amounts to storage of $G_i$ 
thus avoiding unnecessary reevaluation of (parts of) the primal function in the context of {\em checkpointing} methods \cite{Griewank1992ALG}. 
For example, the reversal of the data flow requires the adjoint code in 
Figure~\ref{fig:ex1}~(d) to be preceded by (the relevant parts of) the 
primal code in Figure~\ref{fig:ex1}~(b) (computation of $v_3$).
Vector adjoint propagation induces an (additional) computational cost of 
$\bar{m}_i \cdot |E_i|$.
The minimization of the additional memory requirement 
amounts to a variant of the NP complete {\sc Directed Bandwidth} problem \cite{Naumann2018LMA}.

Equation~(\ref{eqn:va}) can be interpreted as the ``product'' of 
the matrix $\bar{Z}_i$ with the DAG $G_i.$ 
If $\bar{Z}_i$ is dense, then its DAG becomes the directed acyclic
version of the complete bipartite graph $K_{n_i,m_i}.$ The computation
of $\bar{Z}_{i-1}$ amounts to the application of the chain rule to the 
composite DAG \cite{Baur1983TCo}. It can be interpreted as backward vertex elimination 
\cite{Griewank1991OtC} yielding a computational cost of 
\mbox{$\bar{m}_i \cdot |E_i|$}.

Vector tangent and vector adjoint modes belong to the fundamental set of
functionalities offered by the majority of mature algorithmic differentiation software solutions.
Hence, we assume them to be available for all $F_i$ and we refer to them
simply as tangents and adjoints.

Analogous to {\sc Jacobian Chain Product} 
the {\sc Generalized Jacobian Chain Product} problem
asks for an algorithm for computing $F'$ with a minimum number of 
\fma\ operations for
given tangents and adjoints for all $F_i$ in Equation~(\ref{eqn:jcp}).
As a generalization of an NP-complete problem {\sc Generalized Jacobian Chain Product} must be computationally 
intractable too. The corresponding proof turns out to be very similar to the
arguments presented in \cite{Naumann2008OJa} and \cite{Naumann2020CSC}. 
It uses reduction from {\sc Ensemble Computation} which was shown to be 
NP-complete in \cite{Garey1979CaI}:

Given a collection
$C = \{C_\nu \subseteq A : \nu=1,\ldots,|C|\}$ of subsets
$C_\nu = \{c_i^\nu:i=1,\ldots,|C_\nu|\}$
of a finite set $A$ and a positive integer
$K$ is there a sequence
$u_i=s_i \cup t_i$ for $i=1,\ldots,k$ of $k \leq K$ union
operations, where each $s_i$ and $t_i$ is either $\{a\}$ for some $a \in A$
or $u_j$
for some
$j < i,$ such that $s_i$ and $t_i$ are disjoint for $i=1,\ldots,k$ and
such that for every subset $C_\nu \in C,$ $\nu=1,\ldots,|C|,$
there is some $u_i,$ $1 \leq i \leq k,$ that is identical to $C_\nu.$
Instances of {\sc Ensemble Computation} are given as triplets $(A,C,K).$

For example, let $A=\{a_1,a_2,a_3,a_4\},$ $C=\left \{\{a_1,a_2\},\{a_2,a_3,a_4\},\{a_1,a_3,a_4\}\right \}$ and $K=4.$ The answer to the decision version of this instance of 
{\sc Ensemble Computation} is positive with a corresponding solution given by
$C_1=u_1=\{a_1\} \cup \{a_2\};$ $u_2=\{a_3\} \cup \{a_4\};$ $C_2=u_3=\{a_2\} \cup u_2;$ $C_3=u_4=\{a_1\} \cup u_2.$
$K=3$ yields a negative answer identifying $K=4$ as the solution of
the corresponding minimization version of {\sc Ensemble Computation}.

A decision version of {\sc Generalized Jacobian Chain Product} can be 
formulated as follows:

Let tangents $\dot{F}_i \cdot \dot{Z}_i$ and adjoints 
$\bar{Z}_{i+1} \cdot \bar{F}_i$ be given for all elemental functions
$F_i,$ $i=1,\ldots,q,$ in Equation~(\ref{eqn:jcp}) as well as a positive integer $K.$ 
Is there a sequence of \fma\ operations of length $k\leq K$ which yields 
all nonzero entries of $F'$?

An example can be found in Figure~\ref{fig:red} with further explanation to follow.

\begin{theorem} \label{the:gjcp_np}
{\sc Generalized Jacobian Chain Product} is NP-complete.
\end{theorem}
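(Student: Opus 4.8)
The plan is to show that {\sc Generalized Jacobian Chain Product} lies in NP and is NP-hard, the latter by a polynomial-time reduction from {\sc Ensemble Computation}, closely following the arguments of \cite{Naumann2008OJa} and \cite{Naumann2020CSC}. For membership in NP, the natural certificate is the sequence of \fma\ operations itself, each operation annotated with its two factors and its accumulator, where each such quantity is either an entry of a given tangent or adjoint, an entry of $F'$ produced earlier, or a previously computed intermediate. Replaying the sequence and verifying that every nonzero entry of $F'$ has been produced takes time polynomial in the certificate length, so it remains to bound that length: preaccumulation of all $F'_i$ followed by any fixed bracketing of Equation~(\ref{eqn:jcp}) costs a number of \fma\ operations polynomial in the $n_i$, $m_i$, $|V_i|$ and $|E_i|$, and if $K$ exceeds this bound the instance is trivially a ``yes'' instance; hence $K$, and with it the certificate, may be assumed of polynomial size.

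For NP-hardness I would, given an instance $(A,C,K)$ of {\sc Ensemble Computation} with $A=\{a_1,\dots,a_n\}$ and $C=\{C_1,\dots,C_m\}$ --- where, since a singleton set can never arise as a disjoint union of two nonempty sets, we may assume $|C_\nu|\ge 2$ for all $\nu$ --- build a chain $F=F_q\circ\dots\circ F_1$ of length $q=n$ in which each $F_i:\R^m\to\R^m$ is the linear map that multiplies coordinate $\nu$ by a real value $\lambda_i$ whenever $a_i\in C_\nu$ and acts as the identity on the remaining coordinates. Then $G_i$ is a disjoint union of at most $m$ single edges carrying the labels $\lambda_i$ or $1$, every $F'_i$ is diagonal, and $F'=F'_q\cdot\ldots\cdot F'_1$ is the diagonal matrix whose $\nu$-th entry is the squarefree monomial $\prod_{a_j\in C_\nu}\lambda_j$, so that the nonzero entries of $F'$ are in bijection with the members of $C$. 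The values $\lambda_1,\dots,\lambda_n$ are chosen generically, and the budget $K$ is kept. This construction is clearly polynomial.

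For the forward implication, a solution $u_i=s_i\cup t_i$, $i=1,\dots,k\le K$, of {\sc Ensemble Computation} translates step by step into \fma\ operations, the $i$-th multiplying the monomial realizing $s_i$ (a bare $\lambda_j$ if $s_i=\{a_j\}$, otherwise the value formed for $u_j$) by the one realizing $t_i$; disjointness of $s_i$ and $t_i$ makes the result exactly the monomial of $u_i$, and since every $C_\nu$ equals some $u_i$ all nonzero entries of $F'$ appear within $k\le K$ \fma\ operations. Conversely, any quantity an \fma\ sequence forms from $\{1,\lambda_1,\dots,\lambda_n\}$ is a polynomial in the $\lambda_j$ with nonnegative integer coefficients, which for the generic choice of the $\lambda_j$ can equal a target entry only if it equals that multilinear monomial as a polynomial; since sums of distinct monomials and products of monomials with overlapping support are never squarefree monomials, and operations involving the constant $1$ or recomputations create nothing new, we may assume that every \fma\ contributing to a target multiplies two previously formed squarefree monomials of disjoint support, i.e. performs a disjoint union. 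Reading those operations as unions turns an \fma\ sequence of length $\le K$ into an {\sc Ensemble Computation} solution of at most $K$ steps, so the constructed instance is a ``yes'' instance if and only if $(A,C,K)$ is.

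The main obstacle is this converse direction: ruling out ``arithmetic coincidences'' --- in the spirit of the identity $6\cdot 7=7\cdot 6$ of Section~\ref{sec:gjcp} --- by which a numerically clever \fma\ sequence might undercut the combinatorial optimum. This is precisely what the generic choice of the partial derivative values is for: the set of value vectors $(\lambda_1,\dots,\lambda_n)$ for which some shorter sequence exists is contained in a finite union of proper algebraic subvarieties of $\R^n$ and can therefore be avoided (equivalently, one proves the statement for the symbolic version of the problem and then instantiates). A secondary point, easily dispatched because each $G_i$ is structurally trivial, is that the additional options offered by the generalized setting --- preaccumulating $F'_i$, or propagating a matrix through $G_i$ in vector tangent or vector adjoint mode --- reduce, \fma\ for \fma, to the very same scalar multiplications and so provide no shortcut.
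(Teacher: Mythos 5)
Your overall strategy is the paper's: encode an {\sc Ensemble Computation} instance as a chain of diagonal (structurally trivial) factors so that the nonzero diagonal entries of $F'$ are in bijection with the target sets, argue that every useful \fma\ is a multiplication of two sub-monomials with disjoint support and hence corresponds to a disjoint union, and observe that tangent/adjoint propagation and preaccumulation through the trivial $G_i$ offer no shortcut beyond these scalar multiplications. Your membership-in-NP argument (truncating $K$ by the cost of preaccumulation plus a fixed bracketing) is fine, indeed spelled out more carefully than in the paper, and your use of identity (label-$1$) factors in place of the paper's padding of all sets to common cardinality with fresh primes is a harmless variation that even preserves the budget $K$ exactly.

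The genuine gap is in how you rule out arithmetic coincidences. A Karp reduction must output a concrete {\sc Generalized Jacobian Chain Product} instance, i.e.\ explicit numerical edge labels of polynomially bounded size, in deterministic polynomial time. Saying that the bad label vectors lie in a finite union of proper subvarieties of $\R^n$ ``and can therefore be avoided'' does not provide such labels: the union ranges over the (exponentially many, in $K$) computation skeletons of length at most $K$, and you give no procedure producing a point outside it with polynomial bit length; a Schwartz--Zippel style choice would only yield a randomized reduction, and the parenthetical fallback of proving the ``symbolic version'' changes the problem, since instances of the problem as defined carry numerical partial derivatives. The paper closes exactly this hole constructively: it maps the elements of $A$ to distinct primes, so that by the fundamental theorem of arithmetic each target entry has a unique factorization (up to commutativity), and this uniqueness --- rather than a genericity argument --- guarantees that no numerically clever \fma\ sequence can undercut the combinatorial optimum. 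Replacing your ``generic $\lambda_i$'' by the first $|A|$ primes (together with your polynomial-identity cleaning argument, which then still applies) would repair the proof and essentially reproduce the paper's.
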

\begin{proof}
Consider an arbitrary instance $(A,C,K)$ of {\sc Ensemble Computation} and a bijection
$A \leftrightarrow \tilde{A},$ where $\tilde{A}$ consists of $|A|$ mutually 
distinct primes.
A corresponding bijection 
$C \leftrightarrow \tilde{C}$ is implied.
Create an extension $(\tilde{A} \cup \tilde{B},\tilde{C},K+|\tilde{B}|)$ 
by adding unique entries from a sufficiently large set $\tilde{B}$ 
of primes not in $\tilde{A}$ to the $\tilde{C}_j$ such that they all have 
the same cardinality $q$. Note that a solution for this extended 
instance of {\sc Ensemble Computation} implies a solution of the original instance of {\sc Ensemble Computation} as each 
entry of $\tilde{B}$ appears exactly once.

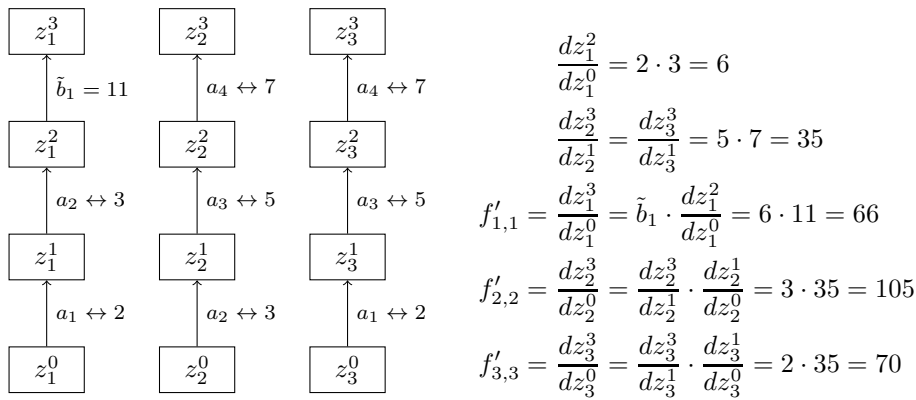
\begin{figure}
\begin{minipage}[c]{.52\linewidth}
\centering
\begin{tikzpicture}[scale=1, transform shape, rectangle]
  \begin{pgfscope}
\tikzstyle{every node}=[draw,rectangle,minimum width=1cm]
	  \node (0) at (0,0) {$z^0_1$};
	  \node (1) at (0,1.5) {$z^1_1$};
	  \node (2) at (0,3) {$z^2_1$};
	  \node (3) at (0,4.5) {$z^3_1$};
	  \node (4) at (2,0) {$z^0_2$};
	  \node (5) at (2,1.5) {$z^1_2$};
	  \node (6) at (2,3) {$z^2_2$};
	  \node (7) at (2,4.5) {$z^3_2$};
	  \node (8) at (4,0) {$z^0_3$};
	  \node (9) at (4,1.5) {$z^1_3$};
	  \node (10) at (4,3) {$z^2_3$};
	  \node (11) at (4,4.5) {$z^3_3$};
  \end{pgfscope}
  \begin{scope}[->]
          \draw (0) -- (1) node[midway,right] {\footnotesize $a_1 \leftrightarrow 2$}; 
          \draw (1) -- (2) node[midway,right] {\footnotesize $a_2 \leftrightarrow 3$}; 
          \draw (2) -- (3) node[midway,right] {\footnotesize $\tilde{b}_1=11$}; 
          \draw (4) -- (5) node[midway,right] {\footnotesize $a_2 \leftrightarrow 3$}; 
          \draw (5) -- (6) node[midway,right] {\footnotesize $a_3 \leftrightarrow 5$}; 
          \draw (6) -- (7) node[midway,right] {\footnotesize $a_4 \leftrightarrow 7$}; 
          \draw (8) -- (9) node[midway,right] {\footnotesize $a_1 \leftrightarrow 2$}; 
          \draw (9) -- (10) node[midway,right] {\footnotesize $a_3 \leftrightarrow 5$}; 
          \draw (10) -- (11) node[midway,right] {\footnotesize $a_4 \leftrightarrow 7$}; 
  \end{scope}
\end{tikzpicture}
\end{minipage} 
\begin{minipage}[c]{.45\linewidth}
\begin{align*}
\frac{d z^2_1}{d z^0_1}&=2 \cdot 3 =6 \\
\frac{d z^3_2}{d z^1_2}&=\frac{d z^3_3}{d z^1_3}=5 \cdot 7 =35 \\
f'_{1,1}=\frac{d z^3_1}{d z^0_1}&=\tilde{b}_1 \cdot \frac{d z^2_1}{d z^0_1}=6 \cdot 11 =66 \\
f'_{2,2}=\frac{d z^3_2}{d z^0_2}&=\frac{d z^3_2}{d z^1_2} \cdot \frac{d z^1_2}{d z^0_2}=3 \cdot 35=105 \\
f'_{3,3}=\frac{d z^3_3}{d z^0_3}&=\frac{d z^3_3}{d z^1_3} \cdot \frac{d z^1_3}{d z^0_3}=2 \cdot 35=70
\end{align*}
\end{minipage} 
\caption{Reduction from {\sc Ensemble Computation} to {\sc (Generalized) Jacobian Chain Product}} 

\label{fig:red}
\end{figure}
Fix the order of the elements of the 
$\tilde{C}_j$ arbitrarily yielding
$\tilde{C}_j=(\tilde{c}^j_i)_{i=1}^q$ for $j=1,\ldots,|\tilde{C}|.$
Let
$$
F_i : \R^{|\tilde{C}|} \rightarrow \R^{|\tilde{C}|} : \quad \Z_i=F_i(\Z_{i-1}) : \quad 
z^i_j=\tilde{c}^j_i \cdot z^{i-1}_j \; .
$$
Equation~(\ref{eqn:jcp}) becomes a diagonal matrix chain product
$F'=F'_q \cdot \ldots \cdot F'_1=D_q \cdot \ldots \cdot D_1$ with
$d^i_{j,j}=\tilde{c}^j_i$ for $j=1,\ldots,|\tilde{C}|$ and 
$i=1,\ldots,q.$ By construction $\fma_{i,i}=\dot{\fma}_i=\bar{\fma}_i=0$
through exploitation of bibpartiteness of the $G_i.$
According to the fundamental theorem of arithmetic \cite{Gauss1801DA}
the elements of $\tilde{C}$ 
correspond to unique (up to commutativity of scalar multiplication) 
factorizations of the $|\tilde{C}|$ nonzero diagonal entries of $F'.$
This uniqueness property extends to arbitrary subsets
of the $\tilde{C}_j$ considered during the exploration of the search space 
of the {\sc Generalized Jacobian Chain Product} problem.

Note that the given family of problem instances are also  
instances of {\sc Jacobian Chain Product} as $\fma_{i,i}=0$ for $i=1,\ldots,q.$ 
A solution implies a solution of the associated extended 
instance of {\sc Ensemble Computation} and, hence, of the original instance 
of {\sc Ensemble Computation}. 

A proposed solution for {\sc Generalized Jacobian Chain Product} is easily 
validated by counting the at
most $|\tilde{C}|\cdot q$ scalar multiplications performed.
\end{proof}

A graphical illustration of the reduction is given in Figure~\ref{fig:red} 
for a problem instance that corresponds to the example presented for 
{\sc Ensemble Computation}. 
\begin{align*}
&A=\{a_1,a_2,a_3,a_4\} \Rightarrow \tilde{A}=\{2,3,5,7\} \\
&\tilde{B}=\{11\} \\
&C=\{\{a_1,a_2\},\{a_2,a_3,a_4\},\{a_1,a_3,a_4\}\} \Rightarrow \tilde{C}=\left \{\{2, 3, 11\},\{3, 5, 7\}, \{2, 5, 7\}\right \} \\
&K+|\tilde{B}|=K+1=5\; .
\end{align*}
The three nonzero diagonal entries of 
$F'=(f'_{j,i}) \in \R^{3 \times 3}$ are computed at the expense of 5 \fma\ (no additions involved)
yielding a positive answer to the given decision version of 
{\sc Generalized Jacobian Chain Product}. 

\section{\sc Generalized Dense Jacobian Chain Product Bracketing}

The formulation of {\sc Jacobian Chain Product Bracketing} assumes availability 
of all factors of 
the Jacobian chain product $F' =F'_q \cdot F'_{q-1} \cdot \ldots \cdot F'_1.$
Locally, the choice is between multiplying $F'_i$ with a factor on its left or on its right within the chain.
{\sc Generalized Dense Jacobian Chain Product Bracketing} assumes availability of 
implementations of 
tangents $\dot{F}_i \cdot \dot{Z}_i$ and adjoints 
$\bar{Z}_{i+1} \cdot \bar{F}_i.$ 
The number of local choices increases. Tangents or adjoints of $F_i$ can be
evaluated or either of them can be used to preaccumulate $F'_i.$ 
All $F'_i$ are assumed to be dense.

Formally, the {\sc Generalized Dense Jacobian Chain Product Bracketing} 
reads as follows:

Let tangents $\dot{F}_i \cdot \dot{Z}_i$ and adjoints 
$\bar{Z}_{i+1} \cdot \bar{F}_i$ be given for all elemental functions
$F_i,$ $i=1,\ldots,q,$ in Equation~(\ref{eqn:jcp}) 
whose respective Jacobians are assumed to be dense.
For a given positive integer $K$ is there a sequence of evaluations of
the tangents and/or adjoints such that the number of \fma\ operations 
required for the accumulation of the Jacobian $F'$ undercuts K?

\paragraph{Example} 
A generalized dense Jacobian chain product of length two yields the following 
eight different bracketings:
\begin{align*}
	F'&=\dot{F}_2 \cdot F'_1 =\dot{F}_2 \cdot (\dot{F}_1  \cdot I_{n_0}) \\
	&=\dot{F}_2 \cdot F'_1 =\dot{F}_2 \cdot (I_{n_1} \cdot \bar{F}_1) \\
	&=F'_2 \cdot \bar{F}_1=(I_{n_2} \cdot \bar{F}_2) \cdot \bar{F}_1 \\
	&=F'_2 \cdot \bar{F}_1=(\dot{F}_2 \cdot I_{n_1}) \cdot \bar{F}_1 \\
	&=F'_2 \cdot F'_1=(\dot{F}_2 \cdot I_{n_1}) \cdot (I_{n_1} \cdot \bar{F}_1) \\
	&=F'_2 \cdot F'_1=(I_{n_2} \cdot \bar{F}_2) \cdot (I_{n_1} \cdot \bar{F}_1) \\
	&=F'_2 \cdot F'_1=(I_{n_2} \cdot \bar{F}_2) \cdot (\dot{F}_1 \cdot I_{n_0}) \\
	&=F'_2 \cdot F'_1=(\dot{F}_2 \cdot I_{n_1}) \cdot (\dot{F}_1 \cdot I_{n_0}) \; .\\
\end{align*}

\begin{theorem} \label{the:GDJCPB}
A solution to {\sc Generalized Dense Jacobian Chain Product Bracketing} can be
computed by dynamic programming as follows:
\begin{equation} \label{eqn:dp2}
\fma_{j,i}=
\begin{cases}
	\centering |E_j| \cdot \min\{n_j,m_j\} & j=i \\ \\
	\min_{i \leq k < j} \left \{ \min \left \{
\begin{split}
&\fma_{j,k+1}+\fma_{k,i} + m_j \cdot m_k \cdot n_i, \\
	&\fma_{j,k+1} + m_j \cdot \sum_{\nu=i}^k |E_\nu|, \\ 
	&\fma_{k,i} + n_i \cdot \sum_{\nu=k+1}^j |E_\nu| 
\end{split}
	\right \} \right \} & j>i \; .
\end{cases}
\end{equation}
\end{theorem}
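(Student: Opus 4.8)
The plan is to verify that the recurrence in Equation~\eqref{eqn:dp2} satisfies the two hallmarks of dynamic programming — \emph{optimal substructure} and \emph{overlapping subproblems} — for the search space induced by {\sc Generalized Dense Jacobian Chain Product Bracketing}, and then to argue by induction on the subchain length $j-i$ that $\fma_{j,i}$ as defined equals the minimal \fma\ cost of producing the dense subproduct $F'_j \cdot \ldots \cdot F'_i$. First I would fix terminology: a \emph{bracketing} of the subchain from $i$ to $j$ either (i) preaccumulates the whole subchain by a single sweep of vector tangents seeded with $I_{n_i}$ (propagating $n_i$ tangents through $G_j,\ldots,G_i$ at cost $n_i \cdot \sum_{\nu=i}^{j}|E_\nu|$), or (ii) preaccumulates it by a single sweep of vector adjoints seeded with $I_{m_j}$ (cost $m_j \cdot \sum_{\nu=i}^{j}|E_\nu|$), or (iii) splits at some $k$, $i \le k < j$, recursively forms the two dense factors $F'_j\cdot\ldots\cdot F'_{k+1}$ and $F'_k\cdot\ldots\cdot F'_i$, and multiplies the resulting $m_j \times m_k$ and $m_k \times n_i$ dense matrices conventionally at cost $m_j \cdot m_k \cdot n_i$. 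I would observe that options (i) and (ii) are exactly the cases in which one of the two recursive pieces is taken to be a trivial identity block absorbed into a matrix-free sweep over the remaining DAGs — this is what the Example with eight bracketings of a length-two chain illustrates — so the three terms inside the inner $\min$ of \eqref{eqn:dp2} are \emph{precisely} an exhaustive enumeration of the top-level local choices.

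Next I would establish the base case $j=i$: a single dense $F'_i \in \R^{m_i \times n_i}$ is best preaccumulated by $\min\{n_i,m_i\}$ seeded sweeps through $G_i$, each of cost $|E_i|$ (forward vertex elimination for tangents, backward for adjoints, per the cost accounting in Section~\ref{sec:gjcp}), giving $|E_i|\cdot\min\{n_i,m_i\}$; no cheaper dense preaccumulation is available within the assumed tangent/adjoint primitives. For the inductive step I would argue optimal substructure: in any optimal bracketing of the subchain $i..j$, consider the last \fma-producing step that completes $F'_j\cdot\ldots\cdot F'_i$. If it is a matrix-matrix multiply, it partitions the chain at some $k$ and the two operands must themselves be produced optimally (else substituting their optimal subbracketings would strictly reduce the total, contradicting optimality) — this is the first term, and here one uses the substitution/cut-and-paste argument standard for chain-product DP. If instead the whole subchain is closed off by a single matrix-free tangent sweep, its seed must be $I_{n_i}$ and some left prefix $F'_j\cdot\ldots\cdot F'_{k+1}$ must already be available as a dense matrix produced optimally, contributing $\fma_{j,k+1} + m_j\cdot\sum_{\nu=i}^{k}|E_\nu|$ — wait, more carefully: the matrix-free sweep over $G_k,\ldots,G_i$ carries $n_i$ directions and costs $n_i\cdot\sum_{\nu=i}^{k}|E_\nu|$, after which it is multiplied into the precomputed $F'_j\cdot\ldots\cdot F'_{k+1}$; symmetrically for the adjoint sweep. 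Taking the minimum over the split point $k$ and over the three modes of closure yields exactly \eqref{eqn:dp2}, and the induction hypothesis guarantees the recursively-referenced entries $\fma_{j,k+1}$ and $\fma_{k,i}$ are themselves optimal.

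The main obstacle I anticipate is justifying that the three enumerated options are genuinely \emph{exhaustive} — i.e.\ that no optimal solution in this search space does something not captured by "multiply two dense subproducts" or "close the chain off with one seeded sweep." Concretely, one must rule out more exotic schedules, such as interleaving a matrix-free sweep through only part of a $G_\nu$, or propagating a seed that is a nontrivial dense matrix rather than an identity (which would be a {\sc Generalized Jacobian Chain Product} move, not a \emph{bracketing} move), or performing more than one matrix-free sweep across an overlapping span. The cleanest route is to define the bracketing search space restrictively from the outset — only identity-seeded full sweeps of contiguous DAG blocks, plus conventional dense matrix products — so that the exhaustiveness claim becomes a structural statement about this restricted space rather than about all conceivable \fma\ sequences; then \eqref{eqn:dp2} is optimal \emph{within} that space by the induction above, and the $O(q^3)$ (or, with the prefix-sums $\sum_\nu|E_\nu|$ precomputed, $O(q^2)$ per entry) time bound follows from the overlapping-subproblems tabulation exactly as for classical matrix-chain bracketing. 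A remark distinguishing this restricted optimum from the (NP-hard) unrestricted {\sc Generalized Jacobian Chain Product} optimum of Theorem~\ref{the:gjcp_np} would close the loop.
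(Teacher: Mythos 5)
Your overall skeleton (induction on the subchain length, a base case for single factors, cut-and-paste optimal substructure, tabulation via overlapping subproblems) matches the paper's proof. But there is a genuine gap at the heart of your argument: you misidentify what the second and third terms of Equation~(\ref{eqn:dp2}) price, and your proposed ``cleanest route'' of restricting the search space to \emph{identity-seeded} full sweeps plus conventional dense matrix products changes the problem. The terms $\fma_{j,k+1} + m_j\sum_{\nu=i}^k|E_\nu|$ and $\fma_{k,i} + n_i\sum_{\nu=k+1}^j|E_\nu|$ do \emph{not} describe an identity-seeded sweep followed by a multiplication into a precomputed prefix (your ``more carefully'' restatement is exactly a suboptimal form of the first term, since it would incur the additional $m_j\cdot m_k\cdot n_i$ product). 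They describe feeding the already preaccumulated dense matrix $F'_{j,k+1}$ (respectively $F'_{k,i}$) as the \emph{seed} of the vector adjoint $F'_{j,k+1}\cdot\bar{F}_{k,i}$ (respectively the vector tangent $\dot{F}_{j,k+1}\cdot F'_{k,i}$), so that the sweep itself produces $F'_{j,i}$ with no further matrix product, and the number of propagated directions is $m_j$ or $n_i$ --- the dimensions of the final target, not of the swept block. This non-identity seeding is precisely what ``generalized'' bracketing adds, and it is what the paper's worked example exploits ($F'_{2,1}$ computed as $F'_2\cdot\bar{F}_1$).

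Your restricted space is strictly weaker, so proving optimality within it would not establish the theorem. Concretely, take $q=2$, $n_1=3$, $m_1=n_2=2$, $m_2=20$, $|E_1|=1000$, $|E_2|=10$. The recurrence gives $\fma_{1,1}=2000$, $\fma_{2,2}=20$, and $\fma_{2,1}=\fma_{1,1}+n_1|E_2|=2030$, realized as $\dot{F}_2\cdot(I_{m_1}\cdot\bar{F}_1)$: preaccumulate $F'_1$ in adjoint mode, then push its three columns through $G_2$ as tangents. In your space the best alternatives are the full tangent sweep ($3\cdot 1010=3030$), the full adjoint sweep ($20\cdot 1010=20200$), or preaccumulate-and-multiply ($2000+20+20\cdot 2\cdot 3=2140$), all strictly worse. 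The correct exhaustiveness argument, as in the paper, enumerates at each split $k$ exactly three closures --- dense product of two preaccumulated subchain Jacobians, adjoint of $F_{k,i}$ seeded with $F'_{j,k+1}$, and tangent of $F_{j,k+1}$ seeded with $F'_{k,i}$ --- verifies the eight length-two bracketings in the base case, and applies the substitution argument separately to each of the three cases; the identity-seeded full sweeps you take as primitive are then dominated special cases (take $k=i$ in the tangent term or $k=j-1$ in the adjoint term), not separate options.
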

\begin{proof}
	We enumerate the four different options in 
	Equation~(\ref{eqn:dp2}) as
	\begin{itemize}
		\item[(a)] $|E_j| \cdot \min\{n_j,m_j\},$
		\item[(b)] $\min_{i \leq  k < j} \fma_{j,k+1}+\fma_{k,i} + m_j \cdot m_k \cdot n_i,$
		\item[(c)] $\min_{i\leq   k < j} \fma_{j,k+1} + m_j \cdot \sum_{\nu=i}^k |E_\nu|$ and
		\item[(d)] $\min_{i \leq k <j} \fma_{k,i} + n_i \cdot \sum_{\nu=k+1}^j |E_\nu|.$
			 \end{itemize}
The proof proceeds by induction over $l=j-i.$
\paragraph{$0 \leq l \leq 1$}
All local Jacobians $F'_j=F'_{j,j}$ 
need to be computed in either tangent or adjoint modes at
computational costs of 
$n_j \cdot |E_j|$ or 
$m_j \cdot |E_j|.$ 
The respective minima
are tabulated. Special structure of the underlying DAGs $G_i=(V_i,E_i)$ such
as bipartiteness is not exploited. It could result in lower values for 
$\fma_{i,i},$ e.g, zero in case of bipartiteness.

The search space for 
the product of two dense Jacobians $F'_{i+1} \cdot F'_i$ for given tangents 
and adjoints
of $F_{i+1}$ and $F_i$ 
consists of the following configurations:
\begin{enumerate}
\item $\dot{F}_{i+1} \cdot (\dot{F}_i \cdot I_{n_i}):$ Homogeneous tangent mode
	yields a computational cost of
$$
	\fma_{i+1,i}=n_{i} \cdot |E_{i}|+ n_i \cdot |E_{i+1}| \; .
$$
Equivalently, this scenario can be interpreted as preaccumulation of $F'_i$ in tangent mode followed by evaluation of 
$\dot{F}_{i+1} \cdot F'_i.$ 
This case is covered by Equation~\ref{eqn:dp2}~(a) and (d)
with $n_i \leq m_i.$
\item $\dot{F}_{i+1} \cdot (I_{m_i} \cdot \bar{F}_i):$ Preaccumulation of 
$F'_i$ in adjoint mode followed by evaluation of 
$\dot{F}_{i+1} \cdot F'_i$ yields a computational cost of 
$$
	\fma_{i+1,i}=m_{i} \cdot |E_{i}|+ n_i \cdot |E_{i+1}| \; . 
$$
This case is covered by Equation~\ref{eqn:dp2}~(a) and (d) with
$n_i \geq m_i.$
\item $(I_{m_{i+1}} \cdot \bar{F}_{i+1}) \cdot \bar{F}_i:$
Homogeneous adjoint mode yields a computational cost of
$$
	\fma_{i+1,i}=m_{i+1} \cdot |E_{i+1}|+ m_{i+1} \cdot |E_i| \; .
$$
Equivalently, the preaccumulation of $F'_{i+1}$ in adjoint mode is followed by evaluation of 
$F'_{i+1} \cdot \bar{F}_i.$ 
This case is covered by Equation~\ref{eqn:dp2}~(a) and (c) with
$n_{i+1} \geq m_{i+1}.$
\item $(\dot{F}_{i+1} \cdot I_{n_{i+1}}) \cdot \bar{F}_i:$ 
Preaccumulation of 
		$F'_{i+1}$ in tangent mode followed by evaluation of 
$F'_{i+1} \cdot \bar{F}_i$ yields a computational cost of
$$
	\fma_{i+1,i}=n_{i+1} \cdot |E_{i+1}|+ m_{i+1} \cdot |E_i| \; .
$$
This case is covered by Equation~\ref{eqn:dp2}~(a) and (c) with
$n_{i+1} \leq m_{i+1}.$
\item $(\dot{F}_{i+1} \cdot I_{n_{i+1}}) \cdot (I_{m_i} \cdot \bar{F}_i):$
Preaccumulation of $F'_i$ in adjoint mode followed by 
		preaccumulation of $F'_{i+1}$ in tangent mode and
		evaluation of the dense matrix product
$F'_{i+1} \cdot F'_i$ yields a variant of homogeneous preaccumulation with
a computational cost of
$$
	\fma_{i+1,i}=m_{i} \cdot |E_{i}|+n_{i+1} \cdot |E_{i+1}|+m_{i+1} \cdot n_{i+1} \cdot n_i \; .
$$
This case is covered by Equation~\ref{eqn:dp2}~(a) and (b) with
$n_i \geq m_i$ and $n_{i+1} \leq m_{i+1}.$
\end{enumerate}
The remaining three homogeneous preaccumulation options cannot improve the
optimum.
\begin{enumerate}
\item $(\dot{F}_{i+1} \cdot I_{n_{i+1}}) \cdot (\dot{F}_i \cdot I_{n_i}):$
Preaccumulation of both $F'_i$ and $F'_{i+1}$ in tangent mode and
		evaluation of the dense matrix product
$F'_{i+1} \cdot F'_i$ yields a computational cost of
$$
	\fma_{i+1,i}= n_{i} \cdot |E_{i}| +n_{i+1} \cdot |E_{i+1}|+m_{i+1} \cdot n_{i+1} \cdot n_i \; .
$$
It follows that $n_i \leq m_i$ and $n_{i+1} \leq m_{i+1}$ as the 
computational cost would otherwise be reduced by preaccumulation 
of either $F'_i$ or $F'_{i+1}$ (or both) in adjoint mode.
Superiority of homogeneous 
tangent mode follows immediately from
$n_i \leq m_i=n_{i+1} \leq m_{i+1}$ implying
		\begin{align*}
			n_{i} \cdot |E_{i}|+ n_i \cdot |E_{i+1}| &\leq 
			n_{i} \cdot |E_{i}|+ n_{i+1} \cdot |E_{i+1}| \\&<
	n_{i} \cdot |E_{i}| +n_{i+1} \cdot |E_{i+1}|+m_{i+1} \cdot n_{i+1} \cdot n_i \; .
		\end{align*}
\item $(I_{m_{i+1}} \cdot \bar{F}_{i+1}) \cdot (I_{m_i} \cdot \bar{F}_i):$
Preaccumulation of both $F'_i$ and $F'_{i+1}$ in adjoint mode and
evaluation of the dense matrix product
$F'_{i+1} \cdot F'_i$ yields a computational cost of
$$
	\fma_{i+1,i}=m_{i} \cdot |E_{i}|+m_{i+1} \cdot |E_{i+1}|+m_{i+1} \cdot n_{i+1} \cdot n_i \; .
$$
It follows that $n_i \geq m_i$ and $n_{i+1} \geq m_{i+1}$ as the 
computational cost would otherwise be reduced by preaccumulation 
of either $F'_i$ or $F'_{i+1}$ (or both) in tangent mode.
Superiority of homogeneous adjoint mode follows immediately from
$n_i \geq m_i=n_{i+1} \geq m_{i+1}$ implying
		\begin{align*}
			m_{i+1} \cdot |E_{i+1}|+ m_{i+1} \cdot |E_i| &\leq 
m_{i} \cdot |E_{i}|+m_{i+1} \cdot |E_{i+1}| \\
			&< m_{i} \cdot |E_{i}|+m_{i+1} \cdot |E_{i+1}|+m_{i+1} \cdot n_{i+1} \cdot n_i \; .
		\end{align*}
\item $(I_{m_{i+1}} \cdot \bar{F}_{i+1}) \cdot (\dot{F}_i \cdot I_{n_i}):$
Preaccumulation of $F'_i$ in tangent mode followed by 
		preaccumulation of $F'_{i+1}$ in adjoint mode and
		evaluation of the dense matrix product
$F'_{i+1} \cdot F'_i$ yields a computational cost of
$$
	\fma_{i+1,i}=n_{i} \cdot |E_{i}|+m_{i+1} \cdot |E_{i+1}|+m_{i+1} \cdot n_{i+1} \cdot n_i \; .
$$
It follows that $n_i \leq m_i$ and $n_{i+1} \geq m_{i+1}$ as the 
computational cost would otherwise be reduced by preaccumulation 
of either $F'_i$ in adjoint mode or by preaccumulation of $F'_{i+1}$ in 
tangent mode (or both).
This scenario turns out to be inferior to either homogeneous tangent or 
adjoint modes. For $n_i \leq m_{i+1}$ 
\begin{align*}
		n_{i} \cdot |E_{i}|+ n_i \cdot |E_{i+1}| &\leq 
		n_{i} \cdot |E_{i}|+ m_{i+1} \cdot |E_{i+1}| \\ &<
	n_{i} \cdot |E_{i}| +m_{i+1} \cdot |E_{i+1}|+m_{i+1} \cdot n_{i+1} \cdot n_i 
\end{align*}
while for 
$n_i \geq m_{i+1}$ 
		\begin{align*}
			m_{i+1} \cdot |E_{i+1}|+ m_{i+1} \cdot |E_i| &\leq 
			m_{i+1} \cdot |E_{i+1}|+ n_i \cdot |E_i| \\ &<
	n_{i} \cdot |E_{i}| +m_{i+1} \cdot |E_{i+1}|+m_{i+1} \cdot n_{i+1} \cdot n_i \; .
\end{align*}
\end{enumerate}
\paragraph{$1 \leq l\Rightarrow l+1$}
{\sc Generalized Dense Jacobian Chain Product Bracketing}  
inherits the {\em overlapping subproblems} property from 
{\sc Dense Jacobian Chain Product Bracketing}.
It adds two choices 
at each split location $i \leq k < j.$ 
Splitting at position $k$ implies the evaluation of 
$F'_{j,i}$ as $F'_{j,k+1} \cdot F'_{k,i}.$ 
In addition to both 
$F'_{j,k+1}$ and $F'_{k,i}$ being available 
there are the following two options: $F'_{k,i}$ is available and 
it enters the tangent $\dot{F}_{j,k+1} \cdot F'_{k,i}$ as argument;
$F'_{j,k+1}$ is available and it enters the adjoint $F'_{j,k+1} \cdot \bar{F}_{k,i}$ as argument.
All three options yield $F'_{j,i}$ and they correspond to 
Equation~\ref{eqn:dp2}~(b)--(d). 

The {\em optimal substructure} property remains to be shown. It implies
feasibility of tabulating solutions to all subproblems 
for constant-time lookup during the exhaustive search of the $3 \cdot l$ 
possible scenarios corresponding to the $l$ split locations.

Let the {\em optimal substructure} property not hold for an optimal
$\fma_{l+1,1}$ obtained at split location $1 \leq k < l+1.$ Three cases need 
to be distinguished that correspond to Equation~\ref{eqn:dp2}~(b)--(d). 
\begin{itemize}
\item[(b)] $\fma_{j,k+1}+\fma_{k,i} + m_j \cdot m_k \cdot n_i:$ 
The optimal substructure property holds for the preaccumulation of both 
$F'_{j,k+1}$ and $F'_{k,i}$ given as chains of length $\leq l.$ 
The cost of the dense matrix product $F'_{j,k+1} \cdot F'_{k,i}$ is independent
of the respective preaccumulation methods. 
For the {\em optimal substructure} property to not hold either the 
preaccumulation $F'_{j,k+1}$ or the preaccumulation of $F'_{k,i}$ must be 
suboptimal. However, replacement of this suboptimal 
preaccumulation method with the tabulated optimum would reduce the overall 
cost and hence yield the desired contradiction.
\item[(c)] $\fma_{j,k+1} + m_j \cdot \sum_{\nu=i}^k |E_\nu|:$
The optimal substructure property holds for the preaccumulation of $F'_{j,k+1}.$
The cost of the adjoint $F'_{j,k+1} \cdot \bar{F}_{k,i}$ is independent
of the preaccumulation method. 
The replacement of a suboptimal
preaccumulation of $F'_{j,k+1}$ with the tabulated optimum would reduce the 
overall cost and hence yield the desired contradiction.
\item[(d)] $\fma_{k,i} + n_i \cdot \sum_{\nu=k+1}^j |E_\nu|:$
The optimal substructure property holds for the preaccumulation of $F'_{k,i}.$
The cost of the tangent $\dot{F}_{j,k+1} \cdot F'_{k,i}$ is independent
of the preaccumulation method. 
The replacement of a suboptimal
preaccumulation of $F'_{k,i}$ with the tabulated optimum would reduce the 
overall cost and hence yield the desired contradiction.
\end{itemize}
\end{proof}
\paragraph{Example} We present examples for the previously discussed 
generalized dense Jacobian chain product of length two. Five configurations 
are considered with their solutions corresponding to the five instances of
the search space investigated in the proof of Theorem~\ref{the:GDJCPB}. Optimal
values are highlighted.
\begin{enumerate}
\item $n_1=2,$ $m_1=n_2=4,$ $m_2=8$, $|E_1|=|E_2|=100:$ 
\begin{align*}
&\fma\left (\dot{F}_2 \cdot (\dot{F}_1  \cdot I_{n_1})\right )={\bf 400}, \quad
\fma\left (\dot{F}_2 \cdot (I_{m_1} \cdot \bar{F}_1) \right )=600, \\
&\fma\left ((I_{m_2} \cdot \bar{F}_2) \cdot \bar{F}_1 \right )=1600, \quad
\fma\left ((\dot{F}_2 \cdot I_{n_2}) \cdot \bar{F}_1 \right )=1200, \\
&\fma\left ((\dot{F}_2 \cdot I_{n_2}) \cdot (I_{m_1} \cdot \bar{F}_1) \right )=864, \quad
\fma\left ((I_{m_2} \cdot \bar{F}_2) \cdot (I_{m_1} \cdot \bar{F}_1) \right )=1264, \\
&\fma\left ((I_{m_2} \cdot \bar{F}_2) \cdot (\dot{F}_1 \cdot I_{n_1}) \right )=1064, \quad
\fma\left ((\dot{F}_2 \cdot I_{n_2}) \cdot (\dot{F}_1 \cdot I_{n_1}) \right )= 664.
\end{align*}
\item $n_1=4,$ $m_1=n_2=2,$ $m_2=32$, $|E_1|=|E_2|=100:$ 
\begin{align*}
&\fma\left (\dot{F}_2 \cdot (\dot{F}_1  \cdot I_{n_1})\right )=800, \quad
\fma\left (\dot{F}_2 \cdot (I_{m_1} \cdot \bar{F}_1) \right )={\bf 600}, \\
&\fma\left ((I_{m_2} \cdot \bar{F}_2) \cdot \bar{F}_1 \right )=6400, \quad
\fma\left ((\dot{F}_2 \cdot I_{n_2}) \cdot \bar{F}_1 \right )=3400, \\
&\fma\left ((\dot{F}_2 \cdot I_{n_2}) \cdot (I_{m_1} \cdot \bar{F}_1) \right )=656, \quad
\fma\left ((I_{m_2} \cdot \bar{F}_2) \cdot (I_{m_1} \cdot \bar{F}_1) \right )=3656, \\
&\fma\left ((I_{m_2} \cdot \bar{F}_2) \cdot (\dot{F}_1 \cdot I_{n_1}) \right )=3856, \quad
\fma\left ((\dot{F}_2 \cdot I_{n_2}) \cdot (\dot{F}_1 \cdot I_{n_1}) \right )= 856.
\end{align*}
\item $n_1=8,$ $m_1=n_2=4,$ $m_2=2$, $|E_1|=|E_2|=100:$ 
\begin{align*}
&\fma\left (\dot{F}_2 \cdot (\dot{F}_1  \cdot I_{n_1})\right )=1600, \quad
\fma\left (\dot{F}_2 \cdot (I_{m_1} \cdot \bar{F}_1) \right )=1200, \\
&\fma\left ((I_{m_2} \cdot \bar{F}_2) \cdot \bar{F}_1 \right )={\bf 400}, \quad
\fma\left ((\dot{F}_2 \cdot I_{n_2}) \cdot \bar{F}_1 \right )=600, \\
&\fma\left ((\dot{F}_2 \cdot I_{n_2}) \cdot (I_{m_1} \cdot \bar{F}_1) \right )=864, \quad
\fma\left ((I_{m_2} \cdot \bar{F}_2) \cdot (I_{m_1} \cdot \bar{F}_1) \right )=664, \\
&\fma\left ((I_{m_2} \cdot \bar{F}_2) \cdot (\dot{F}_1 \cdot I_{n_1}) \right )=1064, \quad
\fma\left ((\dot{F}_2 \cdot I_{n_2}) \cdot (\dot{F}_1 \cdot I_{n_1}) \right )= 1264.
\end{align*}
\item $n_1=32,$ $m_1=n_2=2,$ $m_2=4$, $|E_1|=|E_2|=100:$ 
\begin{align*}
&\fma\left (\dot{F}_2 \cdot (\dot{F}_1  \cdot I_{n_1})\right )=6400, \quad
\fma\left (\dot{F}_2 \cdot (I_{m_1} \cdot \bar{F}_1) \right )=3400, \\
&\fma\left ((I_{m_2} \cdot \bar{F}_2) \cdot \bar{F}_1 \right )=800, \quad
\fma\left ((\dot{F}_2 \cdot I_{n_2}) \cdot \bar{F}_1 \right )={\bf 600}, \\
&\fma\left ((\dot{F}_2 \cdot I_{n_2}) \cdot (I_{m_1} \cdot \bar{F}_1) \right )=656, \quad
\fma\left ((I_{m_2} \cdot \bar{F}_2) \cdot (I_{m_1} \cdot \bar{F}_1) \right )=856, \\
&\fma\left ((I_{m_2} \cdot \bar{F}_2) \cdot (\dot{F}_1 \cdot I_{n_1}) \right )=3856, \quad
\fma\left ((\dot{F}_2 \cdot I_{n_2}) \cdot (\dot{F}_1 \cdot I_{n_1}) \right )= 3656.
\end{align*}
\item $n_1=4,$ $m_1=n_2=2,$ $m_2=4$, $|E_1|=|E_2|=100:$ 
\begin{align*}
&\fma\left (\dot{F}_2 \cdot (\dot{F}_1  \cdot I_{n_1})\right )=800, \quad
\fma\left (\dot{F}_2 \cdot (I_{m_1} \cdot \bar{F}_1) \right )=600, \\
&\fma\left ((I_{m_2} \cdot \bar{F}_2) \cdot \bar{F}_1 \right )=800, \quad
\fma\left ((\dot{F}_2 \cdot I_{n_2}) \cdot \bar{F}_1 \right )=600, \\
&\fma\left ((\dot{F}_2 \cdot I_{n_2}) \cdot (I_{m_1} \cdot \bar{F}_1) \right )={\bf 432}, \quad
\fma\left ((I_{m_2} \cdot \bar{F}_2) \cdot (I_{m_1} \cdot \bar{F}_1) \right )=632, \\
&\fma\left ((I_{m_2} \cdot \bar{F}_2) \cdot (\dot{F}_1 \cdot I_{n_1}) \right )=832, \quad
\fma\left ((\dot{F}_2 \cdot I_{n_2}) \cdot (\dot{F}_1 \cdot I_{n_1}) \right )= 632.
\end{align*}
\end{enumerate}

\section{Implementation and Numerical Results} \label{sec:impl}

Our reference implementation can be downloaded from 
\begin{center}
	\tt
	www.github.com/un110076/ADMission/GDJCPB
\end{center}
together with the sample problems referred to in this section.
Two executables are provided:
\verb!gdjcpb_generate.exe! generates problem 
instances randomly for a given length \verb!len! of the chain and upper
bound \verb!max_m_n! on the number of rows and columns of the individual 
factors. The output can be redirected into a text file which serves as input 
to \verb!gdjcpb_solve.exe!.  
The latter computes one solution to the given 
problem instance. This solution is compared with the costs of the 
homogeneous tangent, adjoint and preaccumulation methods. The latter
implies a solution of the resulting 
{\sc Dense Jacobian Chain Product Bracketing} problem.

The source code is written in simple C++. It should compile under arbitrary 
operating systems assuming availability of a C++14 standard compliant compiler. 
The \verb!Makefile! 
provided covers Linux and \verb!g++! (e.g, version 7.4.0). 
A \verb!README! contains essential 
instructions for building and running.

\begin{table}
\centering
\small
\begin{tabular}{|c|c|c|c|c|c|}
\hline
\verb!len! & \verb!max_mn! & Tangent & Adjoint & Preaccumulation & Optimum \\
\hline
	10 & 10 & 3,708 & 5,562 & \bf 2,618 & \bf 1,344 \\
50 & 50 & \bf 1,283,868 & 1,355,194 & 1,687,575 & \bf 71,668 \\
100 & 100 & \bf 3,677,565 & 44,866,293 & 40,880,996 & \bf 1,471,636\\
250 & 250 & \bf 585,023,794 & 1,496,126,424 & 1,196,618,622& \bf 9,600,070 \\
500 & 500& 21,306,718,862 & 19,518,742,454 & \bf 1,027,696,225 & \bf 149,147,898\\
\hline
\end{tabular}
	\caption{Test Results: Cost in $\fma;$ Superiority of the solutions to {\sc Generalized Dense Jacobian Chain Bracketing} are quantified as the ratios of the highlighted entries in each row.} \label{tab:res}
\end{table}
\paragraph{Example} Running 
\verb!gdjcpb_generate.exe 3 3! yields, for example, 
\begin{lstlisting}
3
3 3 29
1 3 14
2 1 7
\end{lstlisting}
describing the problem instance 
$
F'=F'_3 \cdot F'_2 \cdot F'_1,
$
where \\
\\
$F'_1 \in \R^{3 \times 3} \rightarrow G_1=(V_1,E_1):~|E_1|=29 $\\
$F'_2 \in \R^{1 \times 3} \rightarrow G_2=(V_2,E_2):~|E_2|=14 $\\
$F'_3 \in \R^{2 \times 1} \rightarrow G_3=(V_3,E_3):~|E_3|=7 \; .$ \\
\\
Let this problem description be stored in the text file \verb!problem.txt!.
Running \\
\begin{verbatim}
gdjcpb_solve.exe problem.txt 
\end{verbatim} $\;$ \\
generates the following output:
\begin{lstlisting}
Dynamic Programming Table:
fma_{1,1}=87; Split=0; Operation=Tangent
fma_{2,2}=14; Split=0; Operation=Adjoint
fma_{2,1}=43; Split=1; Operation=Adjoint
fma_{3,3}=7; Split=0; Operation=Tangent
fma_{3,2}=27; Split=2; Operation=Preaccumulation
fma_{3,1}=56; Split=2; Operation=Preaccumulation

Optimal Cost=56

Cost of homogeneous tangent mode=150
Cost of homogeneous adjoint mode=100
Cost of optimal homogeneous preaccumulation=108+15=123
\end{lstlisting}
$F'_1$ is optimally accumulated in tangent mode at the expense of $3\cdot 29=87 \fma$ (similarly, 
$F'_2$ in adjoint mode at $1\cdot 14=14 \fma$ and
$F'_3$ in tangent mode at $1\cdot 7=7 \fma.$ Splitting is not applicable 
(\lstinline{Split=0}).
The optimal method to compute 
$F'_{2,1}$ uses adjoint mode as $F'_2 \cdot \bar{F}_1$ at cost
$14+1 \cdot 29=43 \fma.$
Preaccumulation of $F'_2$ and $F'_3$ followed by the dense matrix product
$F'_3 \cdot F'_2$ turns out to be the optimal method for computing $F'_{3,2}.$
The entire problem instance is evaluated optimally as
$$
F'=(\dot{F}_3 \cdot I_1) \cdot ((I_1 \cdot \bar{F}_2) \cdot \bar{F}_1)
$$
yielding a computational cost of $7\cdot 1 + (14+29)\cdot 1 + 2 \cdot 1 \cdot 3=56 \fma.$ 

Homogeneous tangent mode
$$
F':=\dot{F}_3 \cdot (\dot{F}_2 \cdot (\dot{F}_1 \cdot I_{n_1}))
$$
yields a cost of $n_1 \cdot \sum_{i=1}^3 |E_i|=3 \cdot (29+14+7)=150 \fma.$
Homogeneous adjoint mode
$$
F':=((I_{m_3} \cdot \bar{F}_3) \cdot \bar{F}_2) \cdot \bar{F}_1
$$
yields a cost of $m_3 \cdot \sum_{i=1}^3 |E_i|=2 \cdot (29+14+7)=100 \fma.$
Optimal preaccumulation of $F'_i$ for $i=1,2,3$ takes
$\sum_{i=1}^3 |E_i| \cdot \min(m_i,n_i)=1 \cdot 7 + 1 \cdot 14 + 3 \cdot 29=108 \fma$ followed by optimal bracketing as 
$$
F'=F'_3 \cdot (F'_2 \cdot F'_1)
$$
adding $9+6=15 \fma$ and
yielding a total cost of the optimal homogeneous preaccumulation method of $108+15=123 \fma.$
The dynamic programming solution of the 
{\sc Generalized Dense Jacobian Chain Product Bracketing} problem
yields an improvement of nearly $50$ percent over homogeneous adjoint mode.

In Table~\ref{tab:res} we present further results for problem instances of
growing size generated by calling
\verb!gdjcpb_generate.exe len max_mn.!
The dynamic programming solutions improve the best homogeneous
method by factors between two and sixty. Full specifications of all five test 
problems can be found in the github repository.

\section{Conclusion and Outlook}

This paper generalizes prior work on (Jacobian) matrix chain products in the
context of algorithmic differentiation \cite{Griewank2008EDP,Naumann2012TAo}. Tangents and adjoints of 
differentiable subprograms of numerical simulation programs are typically available rather than the corresponding local Jacobian matrices. Optimal combination of tangents and adjoints yield
sometimes impressive reductions of the overall operations count (factors of 
up to sixty are reported in Section~\ref{sec:impl}). Dynamic programming makes
the underlying abstract combinatorial problem formulation computationally 
tractable. 

Applicability of the algorithmic results of this paper to real world
applications requires further generalization. Rigorous minimization of
the computational cost of an algorithmic differentiation task
must be based on 
information about elemental data dependences and resulting Jacobian sparsity 
patterns. Constraints on the available 
persistent memory need to be taken into account. Coarser grain data dependence
patterns yield matrix DAGs rather than matrix chains. See below for further
illustration.

\paragraph{Exploitation of Local DAG Structure and Jacobian Sparsity} 

Exploitation of sparsity of the $F_i$ in Equation~(\ref{eqn:jcp}) impacts the 
computational cost estimate for their preaccumulation. 
For example, the nonzero entries of a diagonal matrix 
$F'_i \in \R^{n_i \times n_i}$ can be obtained at the expense of $|E_i|$ \fma\
in either tangent or adjoint modes.
Various Jacobian compression 
techniques based on coloring of different representations of the sparsity 
patterns as graphs have been proposed for general Jacobian sparsity patterns 
\cite{Gebremedhin2005WCI}. The minimization of 
the overall computational cost becomes intractable as a consequence of 
intractability of the underlying coloring problems.

Further exploitation of data dependence patterns through structural properties
of the concatenation of the local DAGs may lead to further decrease of the 
computational cost. Vertex, edge, and face elimination techniques have been 
proposed to allow for applications of the chain rule beyond Jacobian chain multiplication \cite{Naumann2004Oao}. For example, the following sparse Jacobian chain product was used in
\cite{Naumann2020CSC} to illustrate superiority of vertex elimination \cite{Griewank1991OtC}:
$$
        \begin{pmatrix}
        m^2_{0,0} & 0 & 0 \\
         0 & m^2_{1,1} & m^2_{1,2} \\
\end{pmatrix}
\begin{pmatrix}
        m^1_{0,0} & m^1_{0,1} & 0 \\
         0 & m^1_{1,1} & m^1_{1,2} \\
         0 & 0  & m^1_{2,2} \\
\end{pmatrix}
\begin{pmatrix}
        m^0_{0,0} &0  \\
         m^0_{1,0} & 0 \\
         0 & m^0_{2,1} \\
\end{pmatrix} \; .
$$
It is straight forward to verify that both bracketings yield a computational 
cost of $9 \fma.$ Full exploitation of distributivity enables computation
of the resulting matrix as
$$
\begin{pmatrix}
        m^2_{0,0} (m^1_{0,0} m^0_{0,0} +m^1_{0,1} m^0_{1,0}) & 0 \\
        m^2_{1,1} m^1_{1,1} m^0_{1,0} & (m^2_{1,1} m^1_{1,2} + m^2_{1,2} m^1_{2,2}) m^0_{2,1} 
\end{pmatrix} 
$$
at the expense of only $8 \fma.$ 

\paragraph{Adding Memory Constraints} 
Let $F=F_3 \circ F_2 \circ F_1 : \R^8 \rightarrow \R$ such that
$F_1 : \R^8 \rightarrow \R^4,$ $F_2 : \R^4 \rightarrow \R^2,$ 
$F_3 : \R^2 \rightarrow \R^1$ and $|E_i|=16$ for $i=1,2,3.$
Execution of \verb!gdjcpb_solve.exe! for a corresponding problem specification
yields the following output:
\begin{lstlisting}
Dynamic Programming Table:
fma_{1,1}=64; Split=0; Operation=Adjoint
fma_{2,2}=32; Split=0; Operation=Adjoint
fma_{2,1}=64; Split=1; Operation=Adjoint
fma_{3,3}=16; Split=0; Operation=Adjoint
fma_{3,2}=32; Split=2; Operation=Adjoint
fma_{3,1}=48; Split=1; Operation=Adjoint

Optimal Cost=48

Cost of homogeneous tangent mode=384
Cost of homogeneous adjoint mode=48
Cost of optimal homogeneous preaccumulation=112+40=152
\end{lstlisting}
Obviously, homogeneous adjoint mode turns out to be optimal, which is also recovered
by the dynamic programming algorithm. 
Let the persistent memory requirement of adjoint mode applied to
$F_i$ be estimated as $|E_i|.$ The total memory requirement of homogeneous 
adjoint mode is equal to $3 \cdot 16=48.$ Let the size of the available 
persistent memory requirement be bounded from above by $\hat{M}=40.$ Homogeneous 
adjoint mode becomes infeasible. 

Feasible alternatives include the preaccumulation of $F'_3$ in tangent mode
with no extra persistent memory required and followed by adjoint mode applied
to $F_2$ and $F_1$ yielding
$$
F'=((\dot{F}_3 \cdot I_2) \cdot \bar{F}_2) \cdot \bar{F}_1
$$
at the expense of $2 \cdot 16 + 1 \cdot 16 + 1 \cdot 16=64 \fma$ and with 
feasible persistent memory requirement of $16+16=32.$

The {\em optimal substructure} property does not hold anymore. 

\paragraph{From Matrix Chains to Matrix DAGs}

Let 
$$
F=\begin{pmatrix}
	F_2 \circ F_1 \\
	F_3 \circ F_1
\end{pmatrix} : \R^2 \rightarrow \R^5
$$
such that $F_1 : \R^2 \rightarrow \R^4,$
$F_2 : \R^4 \rightarrow \R^1,$
$F_3 : \R^4 \rightarrow \R^4$
and $|E_i|=16$ for $i=1,2,3.$ Assuming availability of sufficient persistent 
memory homogeneous adjoint mode turns out to be optimal
for $F_2 \circ F_1.$ The Jacobian of 
$F_3 \circ F_1$ is optimally computed in homogeneous tangent mode which
yields a conflict for $F'_1.$ Separate optimization of the two Jacobian chain 
products $F'_2 \cdot F'_1$ and $F'_3 \cdot F'_1$ yields a cumulative 
computational cost of $1 \cdot (16+16) + 2 \cdot (16+16)=96 \fma.$ 
A better solution is
$$
F'=\begin{pmatrix}
\dot{F}_2 \cdot (\dot{F}_1 \cdot F_2) \\
(I_1 \cdot \bar{F}_3) \cdot (\dot{F}_1 \cdot F_2) 
\end{pmatrix}
$$
yielding a slight decrease in the computational cost to
$2 \cdot 16 + 2 \cdot 16 + 1 \cdot 16 + 1 \cdot 4 \cdot 8=88 \fma.$ 
More significant savings can be expected for less simple DAGs.

\end{document}